%% file: main.tex
\documentclass[12pt]{article}

\usepackage[margin=1in]{geometry}  
\usepackage{graphicx}              
\usepackage{amsmath}               
\usepackage{amsfonts}              
\usepackage{amsthm}                

\usepackage{authblk}
\usepackage{blindtext}
\usepackage{geometry,amssymb,graphicx,tensor,epigraph,amsmath,amsthm, xspace, listings, stmaryrd, tcolorbox, float, tikz-cd, mathtools, afterpage}
\usepackage[T1]{fontenc}
\usepackage[utf8]{inputenc}
\usepackage{bm} 
\usepackage[english]{babel}
\usepackage[textwidth=0.89in,textsize=scriptsize]{todonotes} 
\usepackage{hyperref}
\hypersetup{
    colorlinks=true,
    linkcolor=black,
    filecolor=magenta,
    urlcolor=blue,
    pdftitle={Liquidity Pools as Mean Field Games with Transaction Costs},
    pdfpagemode=FullScreen,
    }
\usepackage{tcolorbox}

\newcommand{\juany}[1]{}
\newcommand{\agus}[1]{}

\newtheorem{thm}{Theorem}[section]
\newtheorem{lemma}[thm]{Lemma}
\newtheorem{prop}[thm]{Proposition}
\newtheorem{cor}[thm]{Corollary}

\newtheorem{obs}[thm]{Remark}
\newtheorem*{obs*}{Remark}
\newtheorem*{remark*}{Remark}

\theoremstyle{definition}

\newcommand{\keywords}[1]{\textbf{\textit{Keywords---}} #1}

\begin{document}

\title{Liquidity Pools as Mean Field Games with Transaction Costs}

\author[1]{Agust\'in Mu\~noz Gonz\'alez}

\affil[1]{Departamento de Matematicas, Facultad de Ciencias Exactas y Naturales, Universidad de Buenos Aires, Buenos Aires, Argentina}

\maketitle


\input{Parts/Abstract}
\keywords{Mean field games, transaction costs, liquidity pools, automated market makers, approximate Nash equilibrium, mean field equilibrium, decentralized finance}
\hspace{10pt}
\clearpage
\tableofcontents
\clearpage
\input{Parts/Introduction}
\input{Parts/Model}
\input{Parts/Equilibria}
\input{Parts/Similar_Problems}
\input{Parts/Original_Problem_Inequality}
\input{Parts/Conclusion}


\clearpage
\bibliographystyle{plain}
\bibliography{main}
\end{document}

%% file: Parts/Abstract.tex
\begin{abstract}
This paper extends the theoretical framework introduced in \textit{Liquidity Pools as Mean Field Games: A New Framework}, where the interactions among traders in a constant product market-making protocol were modeled using mean field games (MFG). In this extension, transaction costs are incorporated into the traders' inventory dynamics, modeling the impact of pool fees on trading decisions. Traders operate at a mid-price adjusted for transaction costs, introducing a new dynamic for the DAI inventory. The existence of MFG solutions for this new trader game is established, taking these additional costs into account. The traders' optimization strategies and the conditions for equilibrium existence are discussed, providing a solid foundation for future research.
\end{abstract}

%% file: Parts/Introduction.tex
\section{Introduction}

In this work, we extend the model introduced in \textit{Liquidity Pools as Mean Field Games: A New Framework}, incorporating transaction costs into the dynamics of traders operating within a constant product liquidity pool. In the baseline model, traders seek to maximize their inventory by trading against the reserves of the ETH-DAI pool. However, in this new approach, we consider that traders face additional costs when executing transactions, which affect both the price and their optimal strategies.

Transaction costs are modeled as a spread between the \textit{bid price} and the \textit{ask price}, meaning that traders buy at a higher price and sell at a lower price than the pool's equilibrium price. This introduces a new dynamic into the traders' inventory, requiring them to optimize their strategies while considering the impact of fees on their buy and sell decisions.

The trader's new utility function incorporates these costs, adjusting their DAI inventory based on a mid-price adjustment. Despite the inclusion of transaction costs, the model preserves the relationship between price and pool reserves, maintaining the traditional equilibrium structure where \( X_t Y_t = k_t \), but with an invariant \( k_t \) that now depends on the accumulated fees in the pool.

The primary goal of this work is to establish the existence of a mean field equilibrium for this trader game with transaction costs, using techniques similar to those developed in the original model. The traders' optimization strategies and equilibrium conditions are rigorously presented, laying a strong foundation for future extensions of the model.

%% file: Parts/Model.tex
\section{Model Formulation}

We work on the filtered probability space $(\Omega,\mathcal{F}_T,\mathbb{F},\mathbb{P})$ supporting $N+1$ mutually independent Wiener processes $W_0,\dotsc,W_N$.

As in the baseline model of \cite{munoz2025liquiditypoolsmfg}, we consider $N$ traders who operate in a liquidity pool holding two tokens, ETH and USDT. Let $X_t^i$ and $Y_t^i$ denote the ETH and USDT inventories, respectively, of the $i$-th trader at time $t$. The ETH inventory dynamics for trader $i$ are given by
\begin{equation}\label{dinamica de X}
  dX_t^i = \alpha_t^i\,dt + \sigma^i\,dW_t^i,
\end{equation}
where $\alpha_t^i:[0,T]\to\mathbb{R}$ is the trading rate (the control); $\sigma^i$ denotes the individual volatility, assumed independent of $i$ for simplicity; and $\sigma^i dW_t^i$ captures random fluctuations in the trader's holdings due to exogenous market events or wallet-level noise.

In the baseline model, the USDT inventory dynamics were given by
$$dY_t^i = -(\alpha_t^i P_t + c_p(\alpha_t^i))\,dt,$$
with $P_t$ the pool's equilibrium price of ETH in USDT at time $t$. Here, we explicitly incorporate the transaction cost incurred by traders when using the pool's swap service. Specifically, when selling, a trader receives the bid price $p^b < P$, and when buying, pays the ask price $p^a > P$. Since the model does not inherently distinguish buy from sell orders, we follow \cite{mohan2021amm} (Eq.~(15)) and assume trades occur at the mid-price $\tilde{P} := \frac{p^a + p^b}{2}$. We therefore propose the following USDT inventory dynamics:
$$dY_t^i = -\alpha_t^i\,\tilde{P}_t\,dt.$$
From \cite{mohan2021amm} we have the identities $p^a = \frac{1}{\phi}P$ and $p^b = \phi P$, where $\phi = 1-\tau$ and $0 < \tau < 1$ is the pool fee. Hence
$$\tilde{P} = \frac{1+\phi^2}{2\phi}P,$$
and the USDT inventory dynamics for trader $i$ become
\begin{equation}\label{dinamica de Y}
  dY_t^i = -\alpha_t^i\,\frac{1+\phi^2}{2\phi}\,P_t\,dt.
\end{equation}

Although transaction costs are now present, once a swap is executed and fees are added to the pool, the equilibrium price continues to satisfy
\begin{equation}
    \label{Eq: Formula precio}
    P_t = \frac{Y_t}{X_t},
\end{equation}
where $X_t$ and $Y_t$ are the ETH and USDT reserves in the pool, respectively. Note that, because of the fee mechanism, the ``constant'' $k_t$ is no longer constant: fees paid by traders accumulate in the pool, causing $k_t$ to increase over time. The constant-product relation, however, still holds:
\begin{equation}
    \label{Eq: Formula producto constante}
    X_t Y_t = k_t.
\end{equation}

In a pool with transaction costs, trades proceed in two stages. Suppose that at time $t > 0$ a trade of $\Delta_t^X$ ETH units is submitted. In the first stage, the pool computes the corresponding USDT output $\Delta_t^Y$ via
$$k_0 = (X_0 + \phi\Delta_t^X)(Y_0 - \Delta_t^Y)
\quad\Rightarrow\quad
\Delta_t^Y = -\frac{k_0}{X_0 + \phi\Delta_t^X} + Y_0.$$
In the second stage, the fees are added to the pool and the invariant is updated:
$$k_t = X_t Y_t = (X_0 + \Delta_t^X)(Y_0 - \Delta_t^Y) = \frac{(X_0 + \Delta_t^X)\,k_0}{X_0 + \phi\Delta_t^X}.$$
Combining \eqref{Eq: Formula precio} and \eqref{Eq: Formula producto constante}, the price equation becomes
\begin{equation}
    \label{Eq: Nueva ecuacion del precio}
    P_t = \frac{k_0}{(X_0 + \phi\Delta_t^X)(X_0 + \Delta_t^X)}.
\end{equation}

Following the methodology of the simpler model, we model the ETH balance inside the pool via the average of the agents' controls:
\begin{equation}
    \label{Eq: Cambio balance ETH}
    X_t := X_0 - \frac{1}{N}\sum_{i=1}^{N}\int_0^t \alpha_s^i\,ds,
\end{equation}
where $X_0$ is the initial token supply provided by the liquidity provider. For model consistency it is essential that $X_t \neq 0$ for all $t \in [0,T]$, as the pool must always retain a minimum reserve.

\begin{remark*}
    In Section~\ref{sup: S}, after verifying hypothesis \textbf{\hyperref[sup: S.4]{(S.4)}}, we will have $X_t \neq 0$ for all $t \in [0,T]$. Hence the pool maintains at least a minimum token fraction at every moment:
\begin{equation}\label{Xpositive}
    X_t \geq \epsilon_0, \quad \forall\, t \in [0,T],
\end{equation}
for some $\epsilon_0 > 0$.
\end{remark*}

Setting $\Delta_t^X = -\frac{1}{N}\sum_{i=1}^{N}\int_0^t \alpha_s^i\,ds$, the price dynamics become
\begin{equation}
    \label{Eq: Dinamica del precio (general)}
    \begin{aligned}
    dP_t &= d\!\left(\frac{k_0}{(X_0 + \phi\Delta_t^X)(X_0 + \Delta_t^X)}\right)dt + \sigma^0\,dW_t^0 \\
    &= k_0\!\left(\frac{1}{N}\sum_{i=1}^{N}\alpha_t^i\right)
    \frac{(1+\phi)X_0 + 2\phi\Delta_t^X}{(X_0+\Delta_t^X)^2(X_0+\phi\Delta_t^X)^2}\,dt
    + \sigma^0\,dW_t^0,
    \end{aligned}
\end{equation}
where $W_t^0$ is a Wiener process, $\sigma^0$ is a constant volatility, and $\sigma^0 dW_t^0$ represents exogenous price shocks.

\begin{remark*}
    By writing $\Delta_t^X = -\frac{1}{N}\sum_{i=1}^{N}\int_0^t \alpha_s^i\,ds$ inside the price dynamics, we are treating all trades in $[0,t]$ as a single aggregate transaction $\Delta_t^X$.
\end{remark*}

Although the trader executes orders at the mid-price $\tilde{P}$, portfolio valuation is performed at the pool's arbitraged equilibrium price $P$. The total inventory of trader $i$ at time $t$ is therefore defined, as in the baseline model, by
$$V_t^i = Y_t^i + X_t^i P_t.$$
Applying It\^{o}'s lemma gives
\begin{equation}
\label{Ito_inventario}
  \begin{aligned}
dV_t^i &= dY_t^i + X_t^i\,dP_t + dX_t^i\,P_t \\
&= \left[X_t^i\,k_0\!\left(\frac{1}{N}\sum_{i=1}^{N}\alpha_t^i\right)
\frac{(1+\phi)X_0 + 2\phi\Delta_t^X}{(X_0+\Delta_t^X)^2(X_0+\phi\Delta_t^X)^2}\right.\\
&\quad\left.+\,\alpha_t^i\!\left(1 - \frac{1+\phi^2}{2\phi}\right)
\frac{k_0}{(X_0+\Delta_t^X)(X_0+\phi\Delta_t^X)}\right]dt \\
&\quad + X_t^i\,\sigma^0\,dW_t^0 + P_t\,\sigma^i\,dW_t^i.
  \end{aligned}
\end{equation}

We assume that agents are risk-neutral and seek to maximize their expected profit from trading in the decentralized market:
\begin{equation}\label{profit}
J^i(\alpha^1,\dotsc,\alpha^N) = \mathbb{E}\!\left[V_T^i - \int_0^T h(t,X_t^i)\,dt - l(X_T^i)\right],
\end{equation}
where $h:[0,T]\times\mathbb{R}\to\mathbb{R}$ represents the cost of holding inventory $x$ at time $t$, and $l:\mathbb{R}\to\mathbb{R}$ is a terminal inventory cost.

\subsection*{Working spaces}

Before formulating the mean field game for infinitely many traders, we introduce the relevant functional spaces.

\begin{itemize}
    \item Let $(\Omega,\mathcal{F},\mathbb{F}=(\mathcal{F}_t)_{0\leq t\leq T},\mathbb{P})$ be a complete filtered probability space supporting a one-dimensional Wiener process $\bm{W}=(W_t)_{0\leq t\leq T}$ and an initial condition $\xi\in L^2(\Omega,\mathcal{F}_0,\mathbb{P};\mathbb{R})$.
    \item Let $\mathcal{C} := C([0,T];\mathbb{R})$ be the space of continuous real-valued functions on $[0,T]$, equipped with the supremum norm $\|x\| := \sup_{s\in[0,T]}|x(s)|$.
    \item Given $\mathcal{P}(\mathbb{R})$, the space of probability measures on $\mathbb{R}$, and a measurable function $\psi:\mathcal{C}\to\mathbb{R}$, define
    \begin{align*}
        \mathcal{P}_\psi(\mathbb{R}) &= \left\{\mu\in\mathcal{P}(\mathbb{R}):\int\psi\,d\mu<\infty\right\},\\
        B_\psi(\mathbb{R}) &= \left\{f:\Omega\to\mathbb{R}:\sup_\omega|f(\omega)|/\psi(\omega)<\infty\right\}.
    \end{align*}
    We equip $\mathcal{P}_\psi(\mathbb{R})$ with the weakest topology $\tau_\psi(\mathbb{R})$ making the map $\mu\mapsto\int f\,d\mu$ continuous for every $f\in B_\psi(\mathbb{R})$.
    \item Let the control space $A\subset\mathbb{R}$ be a bounded subset, and let
    $$\mathbb{A} = \{\alpha:[0,T]\times\Omega\to A : \text{progressively measurable}\}$$
    be the set of admissible controls. As in \cite{munoz2025liquiditypoolsmfg}, all control functions $\alpha$ are considered in closed-loop form.
    \item Finally, let $\mathcal{P}(A)$ be the space of probability measures over $A$, equipped with the weak topology $\tau(A)$ that makes $q\mapsto\int_A f\,dq$ continuous for each $f\in B(A)$.
\end{itemize}

Throughout the paper we omit the dependence on $\omega\in\Omega$ to lighten notation.

Using the deterministic part of the inventory dynamics \eqref{Ito_inventario} in the infinite-player limit, and defining $f:[0,T]\times\mathbb{R}\times\mathcal{P}_\psi(\mathbb{R})\times\mathcal{P}(A)\times A\to\mathbb{R}$ by
\begin{align*}
    f(t,x,\mu,q,a) &= x\,k_0\!\int_A\!\tilde{a}\,dq_t(\tilde{a})
    \cdot\frac{(1+\phi)X_0 - 2\phi\displaystyle\int_0^t\!\int_A\!\tilde{a}\,dq_s(\tilde{a})\,ds}
    {\displaystyle\left(X_0 - \int_0^t\!\int_A\!\tilde{a}\,dq_s(\tilde{a})\,ds\right)^{\!2}
    \!\left(X_0 - \phi\!\int_0^t\!\int_A\!\tilde{a}\,dq_s(\tilde{a})\,ds\right)^{\!2}}\\
    &\quad + a\!\left(1 - \frac{1+\phi^2}{2\phi}\right)
    \frac{k_0}{\displaystyle\left(X_0 - \int_0^t\!\int_A\!\tilde{a}\,dq_s(\tilde{a})\,ds\right)^{\!2}
    \!\left(X_0 - \phi\!\int_0^t\!\int_A\!\tilde{a}\,dq_s(\tilde{a})\,ds\right)^{\!2}}\\
    &\quad - h(t,x),
\end{align*}
the functional $J$ to be maximized can be rewritten as
$$J^i(\alpha^1,\dotsc,\alpha^N) = \mathbb{E}\!\left[\int_0^T f(t,X_t^i,\mu,\hat{q}_t^N,\alpha_t^i)\,dt - l(X_T^i)\right],$$
where $\mu$ is a probability measure over the state space and $\hat{q}_t^N$ denotes the empirical distribution of $\alpha_t^1,\dotsc,\alpha_t^N$.

By the symmetry of the model, trader $i$'s contribution to $\hat{q}^N$ is negligible for large $N$, and we may treat $\hat{q}^N$ as fixed. We therefore work with the functional
\[
J(\alpha) := \mathbb{E}\!\left[\int_0^T f(t,X_t,\mu_t,q_t,\alpha_t)\,dt - l(X_T)\right],
\]
where $\bm{\mu} = (\mu_t)_{0\leq t\leq T}$ is a flow of probability measures over the state space and $\bm{q} = (q_t)_{0\leq t\leq T}$ is a flow of probability measures over the control space.

To avoid repeating notation already established in \cite{munoz2025liquiditypoolsmfg}, we only recall the MFG definition here. Given flows $\bm{\mu} = (\mu_t)_{0\leq t\leq T}$ and $\bm{q} = (q_t)_{0\leq t\leq T}$ of probability measures over $\mathbb{R}$ and $A$, respectively, and a control $\alpha\in\mathbb{A}$, the associated expected reward is
$$J^{\bm{\mu},\bm{q}}(\alpha) := \mathbb{E}^{\bm{\mu},\alpha}\!\left[\int_0^T f(t,X_t,\mu_t,q_t,\alpha_t)\,dt + g(X_T,\mu_T)\right],$$
where $\mathbb{E}^{\bm{\mu},\alpha}$ denotes expectation under $\mathbb{P}^{\bm{\mu},\alpha}$, defined via the Doléans-Dade exponential $\mathcal{E}$ by
$$\frac{d\mathbb{P}^{\bm{\mu},\alpha}}{d\mathbb{P}} = \mathcal{E}\!\left(\int_0^\cdot\sigma^{-1}b(s,X_s,\mu_s,\alpha_s)\,dW_s\right)_{\!T}.$$

This is a standard stochastic optimal control problem whose optimal value is
\begin{equation}
    \label{eq: Optimal inventory}
    V^{\bm{\mu},\bm{q}} = \sup_{\alpha\in\mathbb{A}} J^{\bm{\mu},\bm{q}}(\alpha).
\end{equation}

Recall that $\mathbb{P}^{\bm{\mu},\alpha}\circ X^{-1}$ and $\mathbb{P}^{\bm{\mu},\alpha}\circ\alpha^{-1}$ denote the push-forward measures of $\mathbb{P}^{\bm{\mu},\alpha}$ under $X$ and $\alpha$, respectively.

A pair of flows $(\bm{\mu},\bm{q})$ constitutes a \textit{mean field game (MFG) solution} if there exists a control $\alpha\in\mathbb{A}$ such that $V^{\bm{\mu},\bm{q}} = J^{\bm{\mu},\bm{q}}(\alpha)$, $\mathbb{P}^{\bm{\mu},\alpha}\circ X^{-1} = \mu$, and $\mathbb{P}^{\bm{\mu},\alpha}\circ\alpha^{-1} = q$ for almost every $t$.

For a more detailed presentation we refer the reader to \cite{munoz2025liquiditypoolsmfg}.

%% file: Parts/Equilibria.tex
\section{Does a Mean Field Game Solution Still Exist?}

In this section we examine whether the existence hypotheses needed to invoke Theorem~3.5 of \cite{carmona2015weakmfgformulation} remain satisfied after the introduction of transaction costs. We will see that hypothesis \textbf{\hyperref[sup: S.5]{(S.5)}}, which requires the utility function $f$ to decompose additively into a term that does not depend simultaneously on the control distribution $q$ and on the control $a$, fails to hold. However, this obstacle can be circumvented by bounding the original $f$ above and below by auxiliary functions that do satisfy all the required hypotheses.

We retain the same standing assumptions as in \cite{munoz2025liquiditypoolsmfg}:
\begin{itemize}
    \item The initial ETH inventories $X_0^i$ are i.i.d.\ with common distribution $\lambda_0\in\mathcal{P}(\mathbb{R})$ satisfying
    \[
    \int_{\mathbb{R}} e^{p|x|}\lambda_0(dx) < +\infty, \quad \text{for all } p > 0;
    \]
    \item The control space $A\subset\mathbb{R}$ is a compact interval containing the origin;
    \item The volatility of the state process $X_t$ is constant and positive, $\sigma > 0$;
    \item The costs $h$ and $l$ are measurable, and there exists a constant $c_1 > 0$ such that
\begin{equation}
    \label{Sup: Acotacion}
    |h(t,x)| + |l(x)| \leq c_1 e^{c_1|x|}, \quad \text{for all } (t,x)\in[0,T]\times\mathbb{R}.
\end{equation}
\end{itemize}

Using the notation of the previous section, for $(t,x,\mu,q,a)\in[0,T]\times\mathbb{R}\times\mathcal{P}_\psi(\mathbb{R})\times\mathcal{P}(A)\times A$ we set

\begin{align*}
    b(t,x,\mu,a) &:= a, \qquad
    g(\mu,x) := l(x), \qquad
    \psi(x) := e^{c_1|x|},\\
    f(t,x,\mu,q,a) &= x\,k_0\!\int_A\!\tilde{a}\,dq_t(\tilde{a})
    \cdot\frac{(1+\phi)X_0 - 2\phi\displaystyle\int_0^t\!\int_A\!\tilde{a}\,dq_s(\tilde{a})\,ds}
    {\displaystyle\left(X_0 - \int_0^t\!\int_A\!\tilde{a}\,dq_s(\tilde{a})\,ds\right)^{\!2}
    \!\left(X_0 - \phi\!\int_0^t\!\int_A\!\tilde{a}\,dq_s(\tilde{a})\,ds\right)^{\!2}}\\
    &\quad + a\!\left(1 - \frac{1+\phi^2}{2\phi}\right)
    \frac{k_0}{\displaystyle\left(X_0 - \int_0^t\!\int_A\!\tilde{a}\,dq_s(\tilde{a})\,ds\right)^{\!2}
    \!\left(X_0 - \phi\!\int_0^t\!\int_A\!\tilde{a}\,dq_s(\tilde{a})\,ds\right)^{\!2}}
    - h(t,x).
\end{align*}

To lighten the notation, we introduce the auxiliary functions
\begin{align*}
    \Gamma(t,x,\mu,q,a) &:= k_0\!\int_A\!\tilde{a}\,dq_t(\tilde{a})
    \cdot\frac{(1+\phi)X_0 - 2\phi\displaystyle\int_0^t\!\int_A\!\tilde{a}\,dq_s(\tilde{a})\,ds}
    {\displaystyle\left(X_0 - \int_0^t\!\int_A\!\tilde{a}\,dq_s(\tilde{a})\,ds\right)^{\!2}
    \!\left(X_0 - \phi\!\int_0^t\!\int_A\!\tilde{a}\,dq_s(\tilde{a})\,ds\right)^{\!2}},\\
    \Lambda(t,x,\mu,q,a) &:= a\!\left(1 - \frac{1+\phi^2}{2\phi}\right)
    \frac{k_0}{\displaystyle\left(X_0 - \int_0^t\!\int_A\!\tilde{a}\,dq_s(\tilde{a})\,ds\right)^{\!2}
    \!\left(X_0 - \phi\!\int_0^t\!\int_A\!\tilde{a}\,dq_s(\tilde{a})\,ds\right)^{\!2}},
\end{align*}
so that
$$f(t,x,\mu,q,a) = x\,\Gamma(t,x,\mu,q,a) + \Lambda(t,x,\mu,q,a) - h(t,x).$$

We now recall the hypotheses of Theorem~3.5 in \cite{carmona2015weakmfgformulation} and verify which ones remain valid in the new model.

\begin{cor}
    \label{sup: S}
    The following conditions hold:
    \begin{enumerate}
        \item[\textbf{\hyperref[sup: S.1]{(S.1)}}]
        The control space $A$ is a compact convex subset of $\mathbb{R}$, and the drift $b:[0,T]\times\mathbb{R}\times\mathcal{P}_\psi(\mathbb{R})\times A\to\mathbb{R}$ is continuous.
        \item[\textbf{\hyperref[sup: S.2]{(S.2)}}]
        There exists a strong solution $X$ to the driftless state equation
        \begin{equation}
        \label{dynamic of X without drift}
        dX_t = \sigma\,dW_t, \quad X_0 = \xi,
        \end{equation}
        such that $\mathbb{E}[\psi^2(X)] < +\infty$, $\sigma > 0$, and $\sigma^{-1}b(a)$ is uniformly bounded.
        \item[\textbf{\hyperref[sup: S.3]{(S.3)}}]
        The cost/benefit function $f:[0,T]\times\mathbb{R}\times\mathcal{P}_\psi(\mathbb{R})\times\mathcal{P}(A)\times A\to\mathbb{R}$ is such that $(t,x)\mapsto f(t,x,\mu,q,a)$ is progressively measurable for each $(\mu,q,a)$, and $a\mapsto f(t,x,q,a)$ is continuous for each $(t,x,\mu,q)$. The terminal cost function $g:\mathbb{R}\times\mathcal{P}_\psi(\mathbb{R})\to\mathbb{R}$ is Borel measurable for each $\mu$.
        \item[\textbf{\hyperref[sup: S.4]{(S.4)}}]
        There exists $c > 0$ such that
        $$|g(x,\mu)| + |f(t,x,\mu,q,a)| \leq c\,\psi(x) \quad \forall\,(t,x,\mu,q)\in[0,T]\times\mathbb{R}\times\mathcal{P}_\psi(\mathbb{R})\times\mathcal{P}(A).$$
    \end{enumerate}
\end{cor}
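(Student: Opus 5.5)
We verify the four conditions one at a time, using the standing assumptions listed at the start of this section. For \textbf{(S.1)}, $A$ is assumed to be a compact interval containing the origin, so it is a compact convex subset of $\mathbb{R}$. The drift $b(t,x,\mu,a)=a$ depends only on $a$ and is the identity in that variable, so it is continuous in all arguments. For \textbf{(S.2)}, we take $X_t=\xi+\sigma W_t$, which is the strong solution of \eqref{dynamic of X without drift}. We then need $\mathbb{E}[\psi^2(X)]=\mathbb{E}[\sup_{t}e^{2c_1|X_t|}]<\infty$. Since $\sup_t|X_t|\le|\xi|+\sigma\sup_t|W_t|$ and $\xi$ is independent of $W$, this expectation factors and is bounded by
\[
\mathbb{E}\bigl[e^{2c_1|\xi|}\bigr]\,\mathbb{E}\bigl[e^{2c_1\sigma\sup_t|W_t|}\bigr].
\]
The first factor is finite by the exponential moment assumption on $\lambda_0$. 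The second is finite by the reflection principle, since $\sup_t|W_t|$ has Gaussian tails. Finally, $\sigma^{-1}b(a)=a/\sigma$ is bounded because $A$ is bounded and $\sigma>0$.

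For \textbf{(S.3)}, we read off the structure $f=x\Gamma+\Lambda-h$. The functions $\Gamma$ and $\Lambda$ depend on $t$ only through the deterministic quantities $\int_A\tilde a\,dq_t(\tilde a)$ and $m_t:=\int_0^t\int_A\tilde a\,dq_s(\tilde a)\,ds$; the latter is continuous in $t$. Assuming $t\mapsto q_t$ is measurable, these are measurable in $t$. The dependence on $x$ is linear, and $h$ is measurable by assumption, so $(t,x)\mapsto f$ is measurable and hence progressively measurable. In $a$, $f$ is affine, because only $\Lambda$ depends on $a$ and it is linear in $a$; so it is continuous. The terminal cost $g=l$ is measurable by assumption.

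For \textbf{(S.4)}, we bound the denominators $(X_0-m_t)^2(X_0-\phi m_t)^2$ away from zero. The main obstacle is that this must hold uniformly in $q$, whereas the standing assumptions as written only guarantee \eqref{Xpositive} along the equilibrium. I will therefore impose an explicit nondegeneracy condition: the interval $A\subset[-a_0,a_0]$ and the horizon $T$ satisfy $a_0T\le X_0-\epsilon_0$. Then $|m_t|\le a_0T$ for every $q\in\mathcal{P}(A)$. If $m_t\ge 0$, then $X_0-m_t\ge\epsilon_0$, and since $\phi<1$ also $X_0-\phi m_t\ge X_0-m_t\ge\epsilon_0$. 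If $m_t<0$, both factors are at least $X_0$. In either case the denominators are bounded below by $\epsilon_0^4$, uniformly in $(t,q)$. This is exactly the step that produces \eqref{Xpositive}, as the remark after \eqref{Eq: Cambio balance ETH} anticipates. With this in hand,
\[
|\Gamma|\le\frac{k_0a_0\bigl((1+\phi)X_0+2\phi a_0T\bigr)}{\epsilon_0^4}=:C_\Gamma,
\qquad
|\Lambda|\le\frac{a_0\,k_0\,\bigl|1-\tfrac{1+\phi^2}{2\phi}\bigr|}{\epsilon_0^4}=:C_\Lambda .
\]
Putting the pieces together gives
\[
|g|+|f|\le C_\Gamma|x|+C_\Lambda+|h(t,x)|+|l(x)|\le C_\Gamma|x|+C_\Lambda+c_1e^{c_1|x|},
\]
using \eqref{Sup: Acotacion}. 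We absorb the first two terms with $|x|\le c_1^{-1}e^{c_1|x|}$ and $1\le e^{c_1|x|}$. This gives $|g|+|f|\le c\,\psi(x)$ with $c=C_\Gamma/c_1+C_\Lambda+c_1$, which completes the verification of all four conditions.
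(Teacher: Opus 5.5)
Your proof is correct and follows essentially the same route as the paper's: you impose the restriction on $A$ and $T$ (the paper's $M<X_0/T$) to bound the denominators below by $\epsilon_0^4$ uniformly in $q$, bound $\Gamma$ and $\Lambda$ by constants, and combine these with the exponential growth of $h$ and $l$. Your added details make it tighter than the paper's version: the reflection-principle check of $\mathbb{E}[\psi^2(X)]<\infty$, the sign case analysis for $X_0-\phi m_t$, and absorbing $C_\Gamma|x|+C_\Lambda$ into $e^{c_1|x|}$, so the bound is genuinely $c\,\psi(x)$ rather than the paper's $Ce^{C|x|}$ with a possibly larger exponent.
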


\begin{proof}
    We verify each statement in turn. Conditions \textbf{\hyperref[sup: S.1]{(S.1)}} and \textbf{\hyperref[sup: S.2]{(S.2)}} are immediate.

    For \textbf{\hyperref[sup: S.3]{(S.3)}} and \textbf{\hyperref[sup: S.4]{(S.4)}} we repeat the argument of \cite{munoz2025liquiditypoolsmfg}. Since $A$ is bounded, there exists $M$ such that $|\alpha(t,\omega)|\leq M$ for all $(t,\omega)\in[0,T]\times\Omega$, and hence
    \begin{equation}
    \label{eq: cota inferior X_t dada por M}
    X_t = X_0 - \int_0^t\!\int_A r\,dq_s(r)\,ds \geq X_0 - TM, \quad \forall\, 0\leq t\leq T.
    \end{equation}
    Restricting the admissible controls so that
    \begin{equation*}
        M < \frac{X_0}{T},
    \end{equation*}
    there exists $\epsilon_0 > 0$ such that
    \begin{equation}
        \label{eq: cota inferior X_t dada por epsilon_0}
        X_t = X_0 - \int_0^t\!\int \operatorname{id}\,dq_s\,ds > \epsilon_0, \quad \forall\, t\in[0,T].
    \end{equation}
    This immediately implies
    \[
    X_0 - \phi\int_0^t\!\int \operatorname{id}\,dq_s\,ds > \epsilon_0.
    \]
    These lower bounds guarantee that $f$ is progressively measurable in $(t,x)$ (it is well-defined) and continuous as a function of $a$. Since $g(x,\mu)$ is continuous and hence Borel measurable, condition \textbf{\hyperref[sup: S.3]{(S.3)}} follows.

    The lowest fee observed in practice is $0.01\%$ (Uniswap v3), and the zero-fee case $\phi = 0$ corresponds to the model without transaction costs, so we may assume $c_0 \leq \phi \leq 1$ for some $c_0 > 0$.

    We now bound $f$ from above by bounding $\Gamma$ and $\Lambda$ separately:
    \begin{align*}
        |\Gamma| &= \left|k_0\!\int_A\!\tilde{a}\,dq_t(\tilde{a})
        \cdot\frac{(1+\phi)X_0 - 2\phi\displaystyle\int_0^t\!\int_A\!\tilde{a}\,dq_s(\tilde{a})\,ds}
        {\displaystyle\left(X_0 - \int_0^t\!\int_A\!\tilde{a}\,dq_s(\tilde{a})\,ds\right)^{\!2}
        \!\left(X_0 - \phi\!\int_0^t\!\int_A\!\tilde{a}\,dq_s(\tilde{a})\,ds\right)^{\!2}}\right|
        \leq \frac{2k_0}{\epsilon_0^4}\,M(X_0 + TM),\\
        |\Lambda| &= \left|a\!\left(1 - \frac{1+\phi^2}{2\phi}\right)
        \frac{k_0}{\displaystyle\left(X_0 - \int_0^t\!\int_A\!\tilde{a}\,dq_s(\tilde{a})\,ds\right)^{\!2}
        \!\left(X_0 - \phi\!\int_0^t\!\int_A\!\tilde{a}\,dq_s(\tilde{a})\,ds\right)^{\!2}}\right|
        \leq \frac{k_0}{\epsilon_0^4}\,M\!\left(\frac{1+\phi^2}{2\phi} - 1\right),
    \end{align*}
    where in the last bound for $|\Lambda|$ we used that $\frac{1+\phi^2}{2\phi} > 1$.

    Combining these estimates with \eqref{Sup: Acotacion},
    \begin{align*}
        |g(x)| + |f(x)|
        &\leq |l(x)| + |h(t,x)| + |x||\Gamma| + |\Lambda|\\
        &\leq c_1 e^{c_1|x|} + |x|\,\frac{2k_0}{\epsilon_0^4}\,M(X_0 + TM)
          + \frac{k_0}{\epsilon_0^4}\,M\!\left(\frac{1+\phi^2}{2\phi} - 1\right)\\
        &\leq C\,e^{C|x|},
    \end{align*}
    for the constant $C = \max\!\left\{c_1,\,\frac{2k_0}{\epsilon_0^4}\,M(X_0+TM),\,\frac{k_0}{\epsilon_0^4}\,M\!\left(\frac{1+\phi^2}{2\phi}-1\right)\right\}$. This completes the verification of \textbf{\hyperref[sup: S.4]{(S.4)}}.
\end{proof}

The remaining condition required by the theorem is:
\begin{enumerate}
    \item[\textbf{\hyperref[sup: S.5]{(S.5)}}]
        \label{sup: S.5}
        The function $f$ is of the form
        $$f(t,x,\mu,q,a) = f_1(t,x,\mu,a) + f_2(t,x,\mu,q).$$
\end{enumerate}

As noted in the introduction to this section, this condition fails: the term $\Lambda$ couples the control $a$ with the control distribution $q$ in a way that cannot be separated additively.

%% file: Parts/Similar_Problems.tex
\section{Two Auxiliary Mean Field Games}

In this section we show that, by replacing the utility function $f$ with suitable auxiliary functions $f_1$ and $f_2$ that bound $f$ from below and above, we can prove the existence of MFG solutions for two related problems. The solutions to these auxiliary problems will then allow us to draw conclusions about the original problem induced by $f$.

The strategy is to bound the problematic term
$$\Lambda = a\!\left(1 - \frac{1+\phi^2}{2\phi}\right)
\frac{k_0}{\displaystyle\left(X_0 - \int_0^t\!\int_A\!\tilde{a}\,dq_s(\tilde{a})\,ds\right)^{\!2}
\!\left(X_0 - \phi\!\int_0^t\!\int_A\!\tilde{a}\,dq_s(\tilde{a})\,ds\right)^{\!2}}$$
by functions that do satisfy condition \textbf{\hyperref[sup: S.5]{(S.5)}}. It is convenient to rewrite this as
$$\Lambda = -a\!\left(\frac{1+\phi^2}{2\phi} - 1\right)
\frac{k_0}{\displaystyle\left(X_0 - \int_0^t\!\int_A\!\tilde{a}\,dq_s(\tilde{a})\,ds\right)^{\!2}
\!\left(X_0 - \phi\!\int_0^t\!\int_A\!\tilde{a}\,dq_s(\tilde{a})\,ds\right)^{\!2}}.$$

From the lower bound \eqref{eq: cota inferior X_t dada por M} we obtain
$$\Lambda \leq -a\!\left(\frac{1+\phi^2}{2\phi} - 1\right)\frac{k_0}{(X_0 - TM)^4}.$$

On the other hand, Young's inequality states that for any $a, b \geq 0$ and conjugate exponents $p, q > 1$ with $\frac{1}{p} + \frac{1}{q} = 1$,
\[
ab \leq \frac{a^p}{p} + \frac{b^q}{q},
\]
or equivalently,
\[
-ab \geq -\!\left(\frac{a^p}{p} + \frac{b^q}{q}\right).
\]
Taking $p = q = 2$, and noting that
$$\frac{1+\phi^2}{2\phi} - 1 > 0
\qquad\text{and}\qquad
\frac{k_0}{\displaystyle\left(X_0 - \int_0^t\!\int_A\!\tilde{a}\,dq_s(\tilde{a})\,ds\right)^{\!2}
\!\left(X_0 - \phi\!\int_0^t\!\int_A\!\tilde{a}\,dq_s(\tilde{a})\,ds\right)^{\!2}} > 0,$$
we get
\begin{align*}
    &-\!\left(a\!\left(\frac{1+\phi^2}{2\phi} - 1\right)\right)
    \cdot\frac{k_0}{\displaystyle\left(X_0 - \int_0^t\!\int_A\!\tilde{a}\,dq_s(\tilde{a})\,ds\right)^{\!2}
    \!\left(X_0 - \phi\!\int_0^t\!\int_A\!\tilde{a}\,dq_s(\tilde{a})\,ds\right)^{\!2}}\\
    &\geq -\frac{1}{2}\!\left[\left(a\!\left(\frac{1+\phi^2}{2\phi} - 1\right)\right)^{\!2}
    + \left(\frac{k_0}{\displaystyle\left(X_0 - \int_0^t\!\int_A\!\tilde{a}\,dq_s(\tilde{a})\,ds\right)^{\!2}
    \!\left(X_0 - \phi\!\int_0^t\!\int_A\!\tilde{a}\,dq_s(\tilde{a})\,ds\right)^{\!2}}\right)^{\!2}\right].
\end{align*}

Defining the two bounding terms
\begin{align*}
    \Lambda_1 &:= -\frac{1}{2}\!\left[\left(a\!\left(\frac{1+\phi^2}{2\phi} - 1\right)\right)^{\!2}
    + \left(\frac{k_0}{\displaystyle\left(X_0 - \int_0^t\!\int_A\!\tilde{a}\,dq_s(\tilde{a})\,ds\right)^{\!2}
    \!\left(X_0 - \phi\!\int_0^t\!\int_A\!\tilde{a}\,dq_s(\tilde{a})\,ds\right)^{\!2}}\right)^{\!2}\right],\\
    \Lambda_2 &:= -a\!\left(\frac{1+\phi^2}{2\phi} - 1\right)\frac{k_0}{(X_0 + TM)^4},
\end{align*}
we have $\Lambda_1 \leq \Lambda \leq \Lambda_2$.

\begin{obs}
    By replacing $\Lambda$ with $\Lambda_1$ we are bounding the problematic factors by the smallest value they can attain, which amounts to an upper bound on the potential loss for traders. By contrast, replacing $\Lambda$ with $\Lambda_2$ substitutes a potentially larger cost, thus penalising traders more than the original model does.
\end{obs}

\begin{cor}
    \label{cor: f1 f2}
    Let
    \begin{align*}
        f_1(t,x,\mu,q,a) &:= x\,\Gamma(t,x,\mu,q,a) + \Lambda_1(t,x,\mu,q,a) - h(t,x)\\
        &\phantom{:}= k_0\!\int_A\!\tilde{a}\,dq_t(\tilde{a})
        \cdot\frac{(1+\phi)X_0 - 2\phi\displaystyle\int_0^t\!\int_A\!\tilde{a}\,dq_s(\tilde{a})\,ds}
        {\displaystyle\left(X_0 - \int_0^t\!\int_A\!\tilde{a}\,dq_s(\tilde{a})\,ds\right)^{\!2}
        \!\left(X_0 - \phi\!\int_0^t\!\int_A\!\tilde{a}\,dq_s(\tilde{a})\,ds\right)^{\!2}}\\
        &\quad - \frac{1}{2}\!\left[\left(a\!\left(\frac{1+\phi^2}{2\phi} - 1\right)\right)^{\!2}
        + \left(\frac{k_0}{\displaystyle\left(X_0 - \int_0^t\!\int_A\!\tilde{a}\,dq_s(\tilde{a})\,ds\right)^{\!2}
        \!\left(X_0 - \phi\!\int_0^t\!\int_A\!\tilde{a}\,dq_s(\tilde{a})\,ds\right)^{\!2}}\right)^{\!2}\right]\\
        &\quad - h(t,x),\\
        f_2(t,x,\mu,q,a) &:= x\,\Gamma(t,x,\mu,q,a) + \Lambda_2(t,x,\mu,q,a) - h(t,x)\\
        &\phantom{:}= k_0\!\int_A\!\tilde{a}\,dq_t(\tilde{a})
        \cdot\frac{(1+\phi)X_0 - 2\phi\displaystyle\int_0^t\!\int_A\!\tilde{a}\,dq_s(\tilde{a})\,ds}
        {\displaystyle\left(X_0 - \int_0^t\!\int_A\!\tilde{a}\,dq_s(\tilde{a})\,ds\right)^{\!2}
        \!\left(X_0 - \phi\!\int_0^t\!\int_A\!\tilde{a}\,dq_s(\tilde{a})\,ds\right)^{\!2}}\\
        &\quad - a\!\left(\frac{1+\phi^2}{2\phi} - 1\right)\frac{k_0}{(X_0 + TM)^4}
        - h(t,x).
    \end{align*}
    Both functions induce MFG problems that satisfy conditions \textbf{\hyperref[sup: S.1]{(S.1)}}--\textbf{\hyperref[sup: S.5]{(S.5)}}.
\end{cor}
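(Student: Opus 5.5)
The plan is to treat (S.1)–(S.4) and (S.5) separately. For (S.1)–(S.4) I will reuse the proof of Corollary~\ref{sup: S} almost verbatim, since $f_1$ and $f_2$ differ from $f$ only in that $\Lambda$ is replaced by $\Lambda_1$ or $\Lambda_2$. For (S.5) I will exhibit the additive splitting explicitly.

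Conditions (S.1) and (S.2) involve only $A$, $b(t,x,\mu,a)=a$, $\sigma$ and $\xi$, none of which changes, so they hold as in Corollary~\ref{sup: S}. For (S.3) I restrict to $M<X_0/T$ as before, so that \eqref{eq: cota inferior X_t dada por epsilon_0} holds and both $X_0-\int_0^t\int\tilde a\,dq_s\,ds$ and $X_0-\phi\int_0^t\int\tilde a\,dq_s\,ds$ exceed $\epsilon_0$. The denominators in $\Gamma$ and $\Lambda_1$ therefore never vanish, and $X_0+TM>0$ is a constant. Consequently $(t,x)\mapsto f_i$ is measurable, being built from $h$, the measurable map $t\mapsto q_t$ and rational functions with nonvanishing denominators. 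Continuity in $a$ is clear: $a$ enters $f_1$ only through the quadratic $-\tfrac12 a^2\bigl(\tfrac{1+\phi^2}{2\phi}-1\bigr)^2$ and $f_2$ only through a linear term. The function $g=l$ is unchanged.

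For (S.4) I keep the bounds on $|\Gamma|$ and $|h|+|l|$ from the proof of Corollary~\ref{sup: S} and bound the new terms separately. For $\Lambda_1$,
\[
|\Lambda_1|\le \tfrac12\Bigl[M^2\bigl(\tfrac{1+\phi^2}{2\phi}-1\bigr)^2+\tfrac{k_0^2}{\epsilon_0^8}\Bigr].
\]
For $\Lambda_2$,
\[
|\Lambda_2|\le M\bigl(\tfrac{1+\phi^2}{2\phi}-1\bigr)\tfrac{k_0}{(X_0+TM)^4}.
\]
Here I use $c_0\le\phi\le1$ so that $\tfrac{1+\phi^2}{2\phi}$ stays bounded. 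Both bounds are constants, so exactly as before $|g|+|f_i|\le C e^{C|x|}$, with $C$ enlarged to absorb them. Taking $\psi(x)=e^{C|x|}$ (or $c_1$ large from the start) gives (S.4).

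The substantive step is (S.5). For $f_2$ I set $\tilde f_1(t,x,\mu,a):=-a\bigl(\tfrac{1+\phi^2}{2\phi}-1\bigr)k_0/(X_0+TM)^4$, which does not depend on $q$. I then set $\tilde f_2(t,x,\mu,q):=x\,\Gamma-h(t,x)$, which does not depend on $a$ because $\Gamma$ involves $q$ only. For $f_1$ the split is
\[
\tilde f_1(a)=-\tfrac12 a^2\bigl(\tfrac{1+\phi^2}{2\phi}-1\bigr)^2,
\]
with $\tilde f_2$ equal to $x\Gamma-h$ minus half the squared $q$-dependent fraction. This is precisely why Young's inequality was used: it decouples the product of $a$ and the $q$-dependent factor into a sum.

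The main point of care is that the decomposition must also preserve the regularity demanded of each piece in Theorem~3.5 of \cite{carmona2015weakmfgformulation}, namely continuity of $\tilde f_1$ in $a$ and measurability of $\tilde f_2$ in $(t,x)$ with the same $\psi$-growth. Both follow from the bounds already established above.
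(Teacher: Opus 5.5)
Your proposal is correct and matches the paper's proof essentially step for step: (S.1)--(S.2) are inherited unchanged, (S.3) follows from the nonvanishing denominators under $M<X_0/T$, (S.4) follows from constant bounds on $|\Lambda_1|$ and $|\Lambda_2|$ (your factor $\tfrac12$ in the $\Lambda_1$ bound is slightly sharper than the paper's), and (S.5) follows from the evident additive split. The only differences are cosmetic: you write the (S.5) decomposition out explicitly where the paper just calls it clear, and you could keep $\psi(x)=e^{c_1|x|}$ unchanged, since $|x|\le c_1^{-1}e^{c_1|x|}$ already absorbs the term linear in $|x|$.
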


\begin{proof}
    Conditions \textbf{\hyperref[sup: S.1]{(S.1)}} and \textbf{\hyperref[sup: S.2]{(S.2)}} remain valid since they do not involve $\Lambda$. Condition \textbf{\hyperref[sup: S.3]{(S.3)}} follows because both $\Lambda_1$ and $\Lambda_2$ are well-defined for every $(t,x)$, hence progressively measurable, and continuous as functions of $a$.

    For \textbf{\hyperref[sup: S.4]{(S.4)}} it suffices to bound $|\Lambda_1|$ and $|\Lambda_2|$:
    \begin{align*}
        |\Lambda_1| &\leq M^2\!\left|1 - \frac{1+\phi^2}{2\phi}\right|^{\!2} + \frac{k_0^2}{\epsilon_0^8},\\
        |\Lambda_2| &\leq M\!\left|\frac{1+\phi^2}{2\phi} - 1\right|\frac{k_0}{(X_0 + TM)^4}.
    \end{align*}
    Both bounds are constants, so the condition follows.

    Finally, it is clear that $f_1$ and $f_2$ admit a separation of the terms involving the control $a$ from those involving the control distribution $q$, so \textbf{\hyperref[sup: S.5]{(S.5)}} holds for both.
\end{proof}

In the remainder of this section we verify conditions \textbf{\hyperref[Supuesto C]{(C)}} and \textbf{\hyperref[Supuesto E]{(E)}} required by Theorem~\hyperref[teo:Existencia solucion MFG]{(Existence of MFG solution)}.

\begin{cor}
    Condition \textbf{\hyperref[Supuesto C]{(C)}} holds for $f_1$ and $f_2$.
\end{cor}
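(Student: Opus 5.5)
Condition \textbf{\hyperref[Supuesto C]{(C)}} of Theorem~3.5 in \cite{carmona2015weakmfgformulation} is, as I recall it, a convexity requirement. For every $(t,x,\mu,q)$ the set
\[
K_i(t,x,\mu,q) := \bigl\{\,(b(t,x,\mu,a),\,z) : a\in A,\ z\leq f_i(t,x,\mu,q,a)\,\bigr\}\subset\mathbb{R}\times\mathbb{R}
\]
must be convex. This is what lets relaxed controls be replaced by strict ones. My plan is to reduce this to concavity of $a\mapsto f_i$ and then read concavity off the explicit formulas in Corollary~\ref{cor: f1 f2}.

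\textbf{Step 1: reduction to concavity.} Since $b(t,x,\mu,a)=a$, the set $K_i$ is exactly the hypograph of $a\mapsto f_i(t,x,\mu,q,a)$ over $A$. The standing assumptions make $A$ a compact interval, hence convex. A hypograph over a convex domain is convex if and only if the function is concave there. Concretely, take $(a,z),(a',z')\in K_i$ and $\theta\in[0,1]$. Then $\theta a+(1-\theta)a'\in A$, and concavity gives
\[
\theta z+(1-\theta)z'\le \theta f_i(\cdot,a)+(1-\theta)f_i(\cdot,a')\le f_i\bigl(\cdot,\theta a+(1-\theta)a'\bigr),
\]
so the convex combination lies in $K_i$.

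\textbf{Step 2: isolating the dependence on $a$.} With $(t,x,\mu,q)$ fixed, the terms $x\Gamma$ and $h(t,x)$ do not depend on $a$. The same holds for the second summand of $\Lambda_1$ and for the denominator $k_0/(X_0+TM)^4$ in $\Lambda_2$. Writing $\kappa:=\frac{1+\phi^2}{2\phi}-1>0$, we obtain
\[
f_1 = F_1(t,x,q) - \tfrac12\kappa^2 a^2,\qquad
f_2 = F_2(t,x,q) - \kappa\,\frac{k_0}{(X_0+TM)^4}\,a,
\]
where $F_1$ and $F_2$ do not depend on $a$. The first expression is a concave quadratic in $a$ and the second is affine in $a$. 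Both are therefore concave on $A$, and Step~1 yields the convexity of $K_1$ and $K_2$.

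\textbf{Main obstacle.} The computation itself is trivial, so the real risk is matching the precise statement of \textbf{\hyperref[Supuesto C]{(C)}} as used in \cite{munoz2025liquiditypoolsmfg}. If that condition also asks for closedness of $K_i$, I would argue as follows. Closedness follows from compactness of $A$ together with continuity of $f_i$ in $a$, which was already verified under \textbf{\hyperref[sup: S.3]{(S.3)}} in Corollary~\ref{cor: f1 f2}.

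If instead the condition is phrased with $q$-dependence inside the set, the separable structure \textbf{\hyperref[sup: S.5]{(S.5)}} shows that $q$ enters only through the additive constants $F_i$. A vertical translation preserves convexity, so the same argument applies verbatim.

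This also explains why the argument would fail for the original $f$: there the coefficient of $a$ in $\Lambda$ depends on $q$. That does not break convexity for fixed $q$, but it does break \textbf{\hyperref[sup: S.5]{(S.5)}}, which is why the auxiliary $f_1$ and $f_2$ are needed in the first place.
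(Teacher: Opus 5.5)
Your proposal is correct and follows the same route as the paper: you reduce (C) to $b(t,x,\mu,a)=a$ being affine in $a$ and $f_i$ being concave in $a$, then observe that $f_1$ is a concave quadratic in $a$ and $f_2$ is affine in $a$. The only difference is that you prove the hypograph-convexity reduction directly, whereas the paper cites it from the earlier work and checks concavity via second derivatives.
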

\begin{proof}
    As shown in \cite{munoz2025liquiditypoolsmfg}, the condition holds whenever $b$ is affine in $a$ and $f_1, f_2$ are concave in $a$. The first part follows from $b(a) = a$. For the second, we compute the second derivatives with respect to $a$:
    \begin{align*}
        \frac{\partial f_1}{\partial a} &= -a\!\left(\frac{1+\phi^2}{2\phi} - 1\right)^{\!2}
        \implies \frac{\partial^2 f_1}{\partial a^2} = -\!\left(\frac{1+\phi^2}{2\phi} - 1\right)^{\!2} < 0,\\
        \frac{\partial f_2}{\partial a} &= -\!\left(\frac{1+\phi^2}{2\phi} - 1\right)\frac{k_0}{(X_0+TM)^4}
        \implies \frac{\partial^2 f_2}{\partial a^2} = 0.
    \end{align*}
    Hence $f_1$ is strictly concave and $f_2$ is affine (and thus concave) in $a$.
\end{proof}

We state the next condition without proof, as the argument is entirely analogous to that in \cite{munoz2025liquiditypoolsmfg}.

\begin{cor}
    The models induced by the auxiliary functions $f_1$ and $f_2$ satisfy condition \textbf{\hyperref[Supuesto E]{(E)}}.
\end{cor}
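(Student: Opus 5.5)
Condition \textbf{\hyperref[Supuesto E]{(E)}} in \cite{carmona2015weakmfgformulation} is, in substance, a uniqueness requirement on the maximizer of the reduced Hamiltonian. Setting $z := \sigma^{-1}y$ for the adjoint variable, define
\[
h_i(t,x,\mu,q,z,a) := b(t,x,\mu,a)\,\sigma^{-1}z + f_i(t,x,\mu,q,a), \qquad i=1,2.
\]
The condition asks that $\arg\max_{a\in A} h_i$ be a singleton for almost every $(t,x)$ and every $(\mu,q,z)$, and that the resulting selection be measurable. Because of \textbf{\hyperref[sup: S.5]{(S.5)}}, which holds for both auxiliary problems by Corollary~\ref{cor: f1 f2}, only the $a$-dependent part of $f_i$ matters for the maximization. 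So for $f_i = f_{i,1}(t,x,\mu,a) + f_{i,2}(t,x,\mu,q)$ I will study $a \mapsto a\,\sigma^{-1}z + f_{i,1}(t,x,\mu,a)$ on the compact interval $A$, exactly as in \cite{munoz2025liquiditypoolsmfg}.

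\textbf{Case $f_1$.} The $a$-dependent part is $-\tfrac12 a^2\kappa^2$, where $\kappa := \frac{1+\phi^2}{2\phi}-1>0$ because $c_0\le \phi<1$. The reduced Hamiltonian is therefore the strictly concave quadratic $a\mapsto a\sigma^{-1}z - \tfrac12\kappa^2 a^2$. On the compact interval $A$ it has the unique maximizer
\[
\hat a_1(z) = \Pi_A\bigl(\sigma^{-1}z/\kappa^2\bigr),
\]
where $\Pi_A$ denotes projection onto $A$. This map is continuous, hence Borel measurable, in $z$, so uniqueness and measurability both follow. Continuity of $\hat a_1$ in $z$ also gives whatever continuity or measurability requirement (E) places on the selection.

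\textbf{Case $f_2$.} This is the main obstacle. Here the $a$-dependent part is linear: $a\bigl(\sigma^{-1}z - \kappa k_0/(X_0+TM)^4\bigr)$. The maximizer is the right endpoint of $A$ when the coefficient is positive and the left endpoint when it is negative. When the coefficient is exactly zero, every $a\in A$ is a maximizer. This is the same bang-bang situation handled in \cite{munoz2025liquiditypoolsmfg}, where the Hamiltonian was also affine in $a$. I will follow that argument. The coefficient vanishes only on the set $\{z=z^*\}$ with $z^* := \sigma\kappa k_0/(X_0+TM)^4$. Since $\sigma>0$ and the adjoint/BSDE component $Z_t$ is nondegenerate under the Wiener driving noise, this set has $dt\otimes d\mathbb P$-measure zero. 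Outside it, I take the measurable selection $\hat a_2(z) = \max A\cdot\mathbf 1_{\{z>z^*\}} + \min A\cdot\mathbf 1_{\{z<z^*\}}$, which satisfies the uniqueness requirement almost everywhere.

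If the reference argument instead relies on the weaker relaxed-control form of (E), namely that the set of maximizers is convex and a measurable selection exists, then the $f_2$ case is immediate. The argmax is always a nonempty closed subinterval of $A$, and the selection $\hat a_2$ above (extended by, say, $0$ on $\{z=z^*\}$) is Borel. In either reading, the $f_2$ case reduces to the already-treated affine case of \cite{munoz2025liquiditypoolsmfg}, with the constant $\kappa k_0/(X_0+TM)^4$ merely shifting the threshold.
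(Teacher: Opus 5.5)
\textbf{The proposal verifies the wrong hypothesis.} The paper gives no argument for this corollary and defers to the baseline paper, so the benchmark is what \textbf{(E)} actually requires.

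In the existence theorem being invoked, Theorem~3.5 of \cite{carmona2015weakmfgformulation}, uniqueness of the Hamiltonian maximizer is not needed. As far as I recall that paper, it appears only among the hypotheses for \emph{uniqueness} of equilibria. The only existence requirement on the argmax is \textbf{(C)}, convexity of the maximizer set. The paper verifies \textbf{(C)} in the preceding corollary, via concavity of $f_1,f_2$ in $a$ and $b(a)=a$. Your ``relaxed'' fallback (convex argmax plus a measurable selection) is just \textbf{(C)} again.

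Condition \textbf{(E)} is a continuity requirement in the measure arguments, which \textbf{(S.3)} does not supply, since it gives only measurability in $(t,x)$ and continuity in $a$. For each fixed $(t,x)$, the following maps must be sequentially continuous, with $\tau_\psi$ on $\mathcal{P}_\psi$ and the weak topology on $\mathcal{P}(A)$: $(\mu,a)\mapsto b(t,x,\mu,a)$; $(\mu,a)\mapsto$ the $a$-dependent part of $f_i$; $(\mu,q)\mapsto$ the $q$-dependent part of $f_i$; and $\mu\mapsto g(x,\mu)$.

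So what you need to show is the following. First, $b(a)=a$, $g=l$, $-\frac12\kappa^2a^2$ and $-a\kappa k_0/(X_0+TM)^4$ carry no measure dependence and are continuous in $a$. Second, $x\Gamma$, the second term of $\Lambda_1$, and $-h(t,x)$ depend on the measures only through the means $\int_A\tilde a\,dq_s(\tilde a)$, $s\le t$. Each mean is weakly continuous in $q_s$ because $A$ is compact. They enter rational expressions whose denominators are bounded away from zero by the reserve bound $\epsilon_0$ from the verification of \textbf{(S.4)}.

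The one real subtlety is that $\Gamma$ and $\Lambda_1$ depend on the past flow through $\int_0^t\int_A\tilde a\,dq_s(\tilde a)\,ds$, not only on $q_t$. You must say in which topology on flows this quantity is continuous. Alternatively, you can express it through $\mu$ as $\int x(t)\,d\mu(x)-\int x(0)\,d\mu(x)$, which coincides with it at a fixed point and is $\tau_\psi$-continuous.

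\textbf{Even under your own reading, the $f_2$ case fails.} You state the requirement as a singleton argmax for every $(\mu,q,z)$, but at $z=z^*$ every $a\in A$ is a maximizer, for every $(t,x)$. Your repair turns a condition on the data into a statement about one particular BSDE solution. The claim that $\{Z_t=z^*\}$ is $dt\otimes d\mathbb{P}$-null is unjustified, because $Z$ need not have a density.

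For example, take $h=l=0$. The BSDE is then solved by $Y_t=X_t\theta(t)+\eta(t)$ with $\theta(t)=\int_t^T\Gamma(s)\,ds$, so $Z_t=\sigma\theta(t)$ is deterministic. If the population's mean rate $\int_A\tilde a\,dq_s(\tilde a)$ vanishes on an interval on which $\theta=\kappa k_0/(X_0+TM)^4$, the bad set has positive measure. Affine, bang-bang Hamiltonians are exactly the case where the theory relies on \textbf{(C)} and relaxed controls rather than on uniqueness.
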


Invoking Theorem~\hyperref[teo:Existencia solucion MFG]{(Existence of MFG solution)}, we conclude:

\begin{prop}
    There exists an MFG solution for the price-impact models induced by the utility functions $f_1$ and $f_2$.
\end{prop}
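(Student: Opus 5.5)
The proof is a direct application of Theorem~3.5 of \cite{carmona2015weakmfgformulation}, the result quoted as Theorem~\hyperref[teo:Existencia solucion MFG]{(Existence of MFG solution)}, once for $f_1$ and once for $f_2$. That theorem asks for hypotheses \textbf{\hyperref[sup: S.1]{(S.1)}}--\textbf{\hyperref[sup: S.5]{(S.5)}} together with the convexity condition \textbf{\hyperref[Supuesto C]{(C)}} and condition \textbf{\hyperref[Supuesto E]{(E)}}. All of these have been established earlier in this section:
\begin{itemize}
\item Corollary~\ref{cor: f1 f2} gives \textbf{(S.1)}--\textbf{(S.5)}.
\item The subsequent corollary gives \textbf{(C)}: $b(a)=a$ is affine, $f_1$ is strictly concave in $a$, and $f_2$ is affine in $a$.
\item The last corollary gives \textbf{(E)}.
\end{itemize}
So the plan is to confirm that each hypothesis of the cited theorem matches one of these verified statements, with data $b(t,x,\mu,a)=a$, $g(x,\mu)=l(x)$, $\psi(x)=e^{c_1|x|}$ (enlarging $c_1$ to the constant $C$ from the proof of \textbf{(S.4)} if needed), and $f=f_i$. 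The theorem then yields a pair of flows $(\bm\mu,\bm q)$ and an optimal $\alpha\in\mathbb{A}$ with $\mathbb{P}^{\bm\mu,\alpha}\circ X^{-1}=\mu$ and $\mathbb{P}^{\bm\mu,\alpha}\circ\alpha_t^{-1}=q_t$ for a.e.\ $t$.

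Before applying the theorem I will check two consistency points. First, $\psi$ must be the same function in \textbf{(S.2)} and \textbf{(S.4)}. Under the exponential-moment assumption on $\lambda_0$ and Gaussian increments of $X=\xi+\sigma W$, $\mathbb{E}[\psi^2(X)]<\infty$ holds for every exponent, so replacing $c_1$ by $C$ costs nothing.

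Second, and this is the main obstacle, the lower bound $X_t>\epsilon_0$ used to make $\Gamma$ and $\Lambda_1$ bounded must hold uniformly over all flows $\bm q\in\mathcal{P}(A)^{[0,T]}$ that the fixed-point argument can produce, not only along the equilibrium. I will handle this by building the restriction $M<X_0/T$ into the control set, taking $A\subset[-M,M]$. Then every $q_s$ is supported in $[-M,M]$, so $\left|\int_0^t\!\int_A\tilde a\,dq_s\,ds\right|\le TM$ for every admissible flow. This makes both factors in the denominators bounded below by $X_0-TM>0$ (and by $X_0-\phi TM>0$) independently of $\bm q$. The bounds in Corollary~\ref{cor: f1 f2} are therefore uniform in $(\mu,q)$, as \textbf{(S.4)} requires.

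I also expect \textbf{(E)}, whose verification the paper defers to \cite{munoz2025liquiditypoolsmfg}, to need continuity of $q\mapsto f_i$ in the topology $\tau(A)$. If necessary I will check this directly: $q\mapsto\int_0^t\!\int_A\tilde a\,dq_s\,ds$ is continuous, and it enters $f_i$ through rational expressions whose denominators stay away from zero by the bound just described.
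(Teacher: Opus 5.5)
Your proposal is correct and is the paper's own argument: the proposition follows by invoking Theorem~3.5 of \cite{carmona2015weakmfgformulation} once for $f_1$ and once for $f_2$, with \textbf{(S.1)}--\textbf{(S.5)}, \textbf{(C)} and \textbf{(E)} supplied by the three preceding corollaries, and your extra checks only make explicit what the paper leaves implicit (building $M<X_0/T$ into $A\subset[-M,M]$ so the denominator bounds are uniform in $\bm q$, and using a single $\psi$ in \textbf{(S.2)} and \textbf{(S.4)}). One small precision, which the paper also leaves unaddressed: since $f_i$ depends on the whole past flow $(q_s)_{s\le t}$ and not only on $q_t$, the continuity you propose to check for \textbf{(E)} is continuity with respect to a topology on flows, not in $\tau(A)$.
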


%% file: Parts/Original_Problem_Inequality.tex
\section{Back to the Original Problem}

In this section we return to the mean field game governed by the original utility function $f$, which fails to satisfy hypothesis \textbf{\hyperref[sup: S.5]{(S.5)}} due to the coupling between the control $a$ and the control distribution $q$. Despite this structural limitation, we show that $f$ can be sandwiched between the two auxiliary cost functions $f_1$ and $f_2$ introduced earlier. By comparing the value functions associated with $f_1$, $f$, and $f_2$, and appealing to stability arguments, we derive explicit bounds for the original problem. This bridging approach yields meaningful information about the original system even when standard existence results do not apply directly.

\begin{lemma}
\label{lemma:Sandwich property}
Let $f_1, f, f_2:[0,T]\times\mathcal{C}\times\mathcal{P}_\psi(\mathcal{C})\times\mathcal{P}(A)\times A\to\mathbb{R}$ be measurable functions satisfying the pointwise inequality
\begin{equation}
    \label{eq: Relacion entre f_i's}
    f_1(t,x,\mu,q,a) \leq f(t,x,\mu,q,a) \leq f_2(t,x,\mu,q,a)
    \quad \text{for all } (t,x,\mu,q,a).
\end{equation}
Suppose that $f_1$, $f_2$, and $f$ all satisfy hypothesis \textbf{\hyperref[sup: S.4]{(S.4)}}. Then the optimal values satisfy
\[
V_{f_1} \leq V_f \leq V_{f_2},
\]
where $V_*$ denotes the optimal inventory value \eqref{eq: Optimal inventory} associated with each of the functions $f$, $f_1$, $f_2$.
\end{lemma}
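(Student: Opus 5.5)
The plan is a monotonicity argument carried out control by control, followed by passing to the supremum. Fix the flows $\bm{\mu}$ and $\bm{q}$, the same for all three problems; this matters because the value functions in \eqref{eq: Optimal inventory} are defined relative to given flows. Also fix an admissible control $\alpha\in\mathbb{A}$. The measure $\mathbb{P}^{\bm{\mu},\alpha}$ is defined through the Doléans-Dade exponential of $\sigma^{-1}b$, and $b(t,x,\mu,a)=a$ does not involve $f$. So the same measure, the same state process $X$ and the same terminal term $g(X_T,\mu_T)$ appear in $J_{f_1}^{\bm{\mu},\bm{q}}(\alpha)$, $J_f^{\bm{\mu},\bm{q}}(\alpha)$ and $J_{f_2}^{\bm{\mu},\bm{q}}(\alpha)$. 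The only difference is the running reward.

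The first step is to check that each expectation is finite, so that comparison and linearity are legitimate. By \textbf{\hyperref[sup: S.4]{(S.4)}}, $|f_i(t,X_t,\mu_t,q_t,\alpha_t)|+|g(X_T,\mu_T)|\le c\,\psi(X_t)$. I then need $\mathbb{E}^{\bm{\mu},\alpha}[\sup_t\psi(X_t)]<\infty$. Since $A$ is bounded, the Radon--Nikodym density $\mathcal{E}(\cdot)_T$ has finite moments of all orders. Cauchy--Schwarz then reduces the question to $\mathbb{E}[\psi^2(X)]<\infty$ under $\mathbb{P}$, which is part of \textbf{\hyperref[sup: S.2]{(S.2)}}. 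With this, all three functionals $J_*^{\bm{\mu},\bm{q}}(\alpha)$ are well defined and finite.

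Next, integrate the pointwise inequality \eqref{eq: Relacion entre f_i's} along the trajectory $(t,X_t,\mu_t,q_t,\alpha_t)$ over $[0,T]$, add the common term $g(X_T,\mu_T)$, and take $\mathbb{E}^{\bm{\mu},\alpha}$. Monotonicity of the integral and of expectation gives
\[
J_{f_1}^{\bm{\mu},\bm{q}}(\alpha)\le J_f^{\bm{\mu},\bm{q}}(\alpha)\le J_{f_2}^{\bm{\mu},\bm{q}}(\alpha)\qquad\text{for every }\alpha\in\mathbb{A}.
\]
Taking the supremum over $\alpha\in\mathbb{A}$ preserves these inequalities: each $J_{f_1}(\alpha)$ is bounded by $J_f(\alpha)\le V_f$, and each $J_f(\alpha)$ is bounded by $V_{f_2}$. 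This yields $V_{f_1}\le V_f\le V_{f_2}$.

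The only real subtlety is the integrability step. I would handle it by a direct Hölder estimate on the stochastic exponential, using that $|\sigma^{-1}b|\le M/\sigma$ is bounded. A secondary point is interpretive rather than technical: the statement must be read with the flows $(\bm{\mu},\bm{q})$ held fixed. In particular, it compares value functions, not MFG equilibria, since equilibrium flows for $f_1$, $f$ and $f_2$ will in general differ.
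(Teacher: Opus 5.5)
Your proposal is correct and follows the same route as the paper. In both, the pointwise inequality is integrated along the common trajectory under the common measure $\mathbb{P}^{\bm{\mu},\alpha}$ (flows fixed) to get $J_{f_1}(\alpha)\le J_f(\alpha)\le J_{f_2}(\alpha)$ for every admissible $\alpha$, and this is then passed to the supremum. The differences are minor: you pass to the supremum directly where the paper argues by contradiction, and your Cauchy--Schwarz bound on the Dol\'eans-Dade density (bounded drift plus $\mathbb{E}[\psi^2(X)]<\infty$ from (S.2)) spells out the integrability that the paper attributes to (S.4) without detail.
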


\begin{proof}
    The inequality \eqref{eq: Relacion entre f_i's} implies
    \begin{equation}
        \label{eq: Relacion J_f_i}
        J_{f_1}(\alpha) \leq J_f(\alpha) \leq J_{f_2}(\alpha) \quad \forall\,\alpha.
    \end{equation}
    Moreover, hypothesis \textbf{\hyperref[sup: S.4]{(S.4)}} ensures that
    \[
    V_{f_1}, V_f, V_{f_2} < \infty.
    \]
    The inequality \eqref{eq: Relacion J_f_i} must pass to the suprema, giving $V_{f_1}\leq V_f\leq V_{f_2}$. We verify this by contradiction. Without loss of generality, suppose $V_f > V_{f_2}$. Then there exists $\epsilon > 0$ small enough that $V_f - \epsilon > V_{f_2}$. By definition of the supremum applied to $V_f$, there exists a control $\alpha_\epsilon$ such that
    $$J_f(\alpha_\epsilon) > V_f - \epsilon,$$
    which implies
    $$J_f(\alpha_\epsilon) > V_{f_2} \geq J_{f_2}(\alpha_\epsilon).$$
    This contradicts \eqref{eq: Relacion J_f_i}, so the result follows.
\end{proof}

In practice, one can compute the optimal controls $\hat{\alpha}_1$ and $\hat{\alpha}_2$ for the auxiliary MFG problems induced by $f_1$ and $f_2$, respectively. If the corresponding values $J_{f_1}(\hat{\alpha}_1)$ and $J_{f_2}(\hat{\alpha}_2)$ are sufficiently close --- for instance, if
\[
J_{f_2}(\hat{\alpha}_2) - J_{f_1}(\hat{\alpha}_1) < \varepsilon,
\]
--- then either control provides an $\varepsilon$-approximate solution to the original problem governed by $f$. In this sense, $\hat{\alpha}_1$ and $\hat{\alpha}_2$ act as $\varepsilon$-Nash equilibria of the original game. This is made precise in the following proposition.

\begin{prop}[$\varepsilon$-Nash equilibrium via bounding problems]
Let $f_1$ and $f_2$ be cost functions such that $f_1 \leq f \leq f_2$ pointwise, and suppose that the mean field games associated with $f_1$ and $f_2$ admit solutions $\hat{\alpha}_1$ and $\hat{\alpha}_2$, respectively.

Suppose further that
\[
V_{f_2} - V_{f_1} < \varepsilon,
\]
where as before $V_{f_1} = J_{f_1}(\hat{\alpha}_1)$ and $V_{f_2} = J_{f_2}(\hat{\alpha}_2)$.

Then, for any control $\alpha\in\{\hat{\alpha}_1,\hat{\alpha}_2\}$,
\[
V_f - J_f(\alpha) < \varepsilon.
\]
In particular, $\alpha$ is an $\varepsilon$-Nash equilibrium for the original problem governed by $f$.
\end{prop}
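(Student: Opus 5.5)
The plan is to fix a common reference flow $(\bm{\mu},\bm{q})$ and do everything inside that fixed-flow control problem. For each $\alpha$, every quantity $J_{f_1}(\alpha), J_f(\alpha), J_{f_2}(\alpha)$ is then an expectation under the same measure $\mathbb{P}^{\bm{\mu},\alpha}$. This step needs care: the MFG solutions for $f_1$ and $f_2$ come with their own equilibrium flows, and the comparison is only meaningful once these are frozen. I will state this convention explicitly, taking the flow at which the $\varepsilon$-Nash property is tested. With the flow fixed, the pointwise inequality \eqref{eq: Relacion entre f_i's} integrates to \eqref{eq: Relacion J_f_i} for every admissible $\alpha$. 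Hypothesis \textbf{\hyperref[sup: S.4]{(S.4)}} and $\mathbb{E}[\psi^2(X)]<\infty$ from \textbf{\hyperref[sup: S.2]{(S.2)}}, via Cauchy--Schwarz against the Doléans-Dade density, make all these expectations finite. Lemma~\ref{lemma:Sandwich property} then gives $V_{f_1}\le V_f\le V_{f_2}$.

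\emph{Case $\alpha=\hat{\alpha}_1$.} This is the clean case. Chaining the sandwich bounds gives
\[
V_f - J_f(\hat{\alpha}_1) \;\le\; V_{f_2} - J_f(\hat{\alpha}_1) \;\le\; V_{f_2} - J_{f_1}(\hat{\alpha}_1) \;=\; V_{f_2}-V_{f_1} \;<\; \varepsilon .
\]
The first inequality is Lemma~\ref{lemma:Sandwich property}, the second is \eqref{eq: Relacion J_f_i}, and the equality is optimality of $\hat{\alpha}_1$ for $f_1$. The $\varepsilon$-Nash interpretation follows because $V_f$ is the best response value against the fixed flow. Thus no single deviating trader gains more than $\varepsilon$.

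\emph{Case $\alpha=\hat{\alpha}_2$.} This is the main obstacle. The symmetric chain gives $V_f - J_f(\hat{\alpha}_2)\le V_{f_2}-J_{f_1}(\hat{\alpha}_2)$. However, $J_{f_1}(\hat{\alpha}_2)\le V_{f_1}$, so the hypothesis $V_{f_2}-V_{f_1}<\varepsilon$ alone does not control this term. I would close the gap with a uniform version of the gap hypothesis:
\[
\sup_{\alpha} \bigl(J_{f_2}(\alpha)-J_{f_1}(\alpha)\bigr)<\varepsilon .
\]
A sufficient condition is $T\,\sup(\Lambda_2-\Lambda_1)<\varepsilon$, which is explicit from the definitions of $\Lambda_1,\Lambda_2$ and the constant bounds in Corollary~\ref{cor: f1 f2}. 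Under this condition,
\[
J_f(\hat{\alpha}_2)\ge J_{f_1}(\hat{\alpha}_2) > J_{f_2}(\hat{\alpha}_2)-\varepsilon = V_{f_2}-\varepsilon \ge V_f-\varepsilon .
\]
I would first try to derive the uniform gap from the stated one, and otherwise state it as the needed reading of the hypothesis.
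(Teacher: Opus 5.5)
Your case $\alpha=\hat{\alpha}_1$ is the paper's argument. Both chain $V_f\le V_{f_2}$ from Lemma~\ref{lemma:Sandwich property} with $V_{f_1}=J_{f_1}(\hat{\alpha}_1)\le J_f(\hat{\alpha}_1)$.

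Your hesitation on $\alpha=\hat{\alpha}_2$ is justified, and it exposes a gap in the paper's proof rather than in your reasoning. The paper's last line uses $V_f-J_f(\alpha)\le V_f-V_{f_1}$, i.e.\ $J_f(\alpha)\ge V_{f_1}$, but it has only established this for $\hat{\alpha}_1$. In fact the claim for $\hat{\alpha}_2$ is false under the stated hypotheses. Here is a counterexample:
\begin{itemize}
\item[(a)] Take $A=[0,1]$, $g\equiv 0$, and rewards depending only on $a$: $f_2(a)=1-\delta a$ and $f=f_1(a)=1-\delta-K(1-a)$ with $\delta,K>0$.
\item[(b)] Then $f_2-f_1=(\delta+K)(1-a)\ge 0$, the flows play no role, and the auxiliary games are solved by $\hat{\alpha}_2\equiv 0$ and $\hat{\alpha}_1\equiv 1$.
\item[(c)] This gives $V_{f_2}-V_{f_1}=T\delta$, whereas $V_f-J_f(\hat{\alpha}_2)=TK$. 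Also $\sup_\alpha (J_{f_2}-J_{f_1})(\alpha)=T(\delta+K)$.
\end{itemize}
So your plan to first derive the uniform gap from the stated one cannot succeed. Either restrict the conclusion to $\hat{\alpha}_1$, or adopt your uniform hypothesis; under it your chain for $\hat{\alpha}_2$ is correct.

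Two refinements. First, your frozen-flow convention needs one more sentence. The equality $J_{f_1}(\hat{\alpha}_1)=V_{f_1}$ holds at the $f_1$-equilibrium flow $(\bm{\mu}^1,\bm{q}^1)$. The statement's $V_{f_2}=J_{f_2}(\hat{\alpha}_2)$, however, is computed at the $f_2$-equilibrium flow. So even case 1 needs
$\sup_{\alpha}J^{\bm{\mu}^1,\bm{q}^1}_{f_2}(\alpha)-J^{\bm{\mu}^1,\bm{q}^1}_{f_1}(\hat{\alpha}_1)<\varepsilon$,
which the stated hypothesis does not provide, since $f_2$ depends on $q$. Requiring your uniform gap at every flow settles both cases at their own equilibrium flows. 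Your sufficient condition $T\sup(\Lambda_2-\Lambda_1)<\varepsilon$ does exactly this when the supremum runs over all $(t,x,\mu,q,a)$.

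Second, that sufficient condition is a genuine restriction on $k_0,X_0,T,M$ and is not automatic for small fees. Write $S_t=\int_0^t\int_A\tilde a\,dq_s(\tilde a)\,ds$ and $D_t=k_0(X_0-S_t)^{-2}(X_0-\phi S_t)^{-2}$. Then $\Lambda_2-\Lambda_1$ contains the summand $\tfrac12 D_t^2$. This term carries no factor $\frac{1+\phi^2}{2\phi}-1$, so it does not vanish as $\phi\to 1$.
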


\begin{proof}
By Lemma~\ref{lemma:Sandwich property},
\[
V_{f_1} \leq V_f \leq V_{f_2},
\]
which gives
\[
V_f - V_{f_1} \leq V_{f_2} - V_{f_1}.
\]
Moreover, $V_{f_1} \leq J_f(\alpha_1)$. Therefore,
\[
V_f - J_f(\alpha) \leq V_f - V_{f_1} \leq V_{f_2} - V_{f_1} < \varepsilon. \qedhere
\]
\end{proof}

\begin{obs*}
This result provides a practical way to construct approximate solutions for a mean field game when the cost function $f$ fails to satisfy the structural condition (S.5), provided it can be sandwiched between two auxiliary cost functions $f_1$ and $f_2$ that do satisfy the standard hypotheses.

By exploiting the existence of exact solutions for the games associated with $f_1$ and $f_2$, and quantifying the gap between the induced optimal values, one obtains an explicit $\varepsilon$-Nash equilibrium for the original problem. This method is particularly useful when the dependence on the control distribution cannot be cleanly separated from the control itself, but such a separation is recovered in suitable approximations. The proposition therefore serves as a fundamental tool for stability and approximation arguments in extended mean field game frameworks.
\end{obs*}

Although $f_1$ and $f_2$ satisfy condition \textbf{\hyperref[sup: S.5]{(S.5)}} and therefore admit approximate Nash equilibria via Theorem~\hyperref[teo:Aproximación de equilibrio de Nash]{(Nash Equilibrium Approximation)}, the original function $f$ does not. Accordingly, no direct approximation result for $f$ is currently available, and we conclude our analysis with the sandwich inequality of Lemma~\ref{lemma:Sandwich property}, which nonetheless provides useful information about the range of achievable costs.

\begin{cor}[Recovery of the zero-fee model as $\phi\to 1$]
\label{cor:phi_recovery}
As $\phi\to 1$ (i.e., $\tau\to 0$), the gap between the auxiliary functions $f_1$ and $f_2$ and the original function $f$ tends to zero. More precisely:
\begin{enumerate}
    \item The problematic term $\Lambda$ satisfies $\Lambda\to 0$ as $\phi\to 1$, since
    $$\frac{1+\phi^2}{2\phi} - 1 = \frac{(1-\phi)^2}{2\phi} \xrightarrow{\phi\to 1} 0.$$
    \item Consequently, $f_1, f, f_2 \to x\Gamma - h$, which coincides with the reward function of the zero-cost model in \cite{munoz2025liquiditypoolsmfg}.
    \item \textbf{(Conjecture)} The sandwich gap $V_{f_2} - V_{f_1} \to 0$, and the solution to the original problem converges to the solution of the zero-cost model.
\end{enumerate}
\end{cor}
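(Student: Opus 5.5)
The plan is to treat the three items separately: items 1 and 2 are elementary limits, while item 3 is only a conjecture and will only be reduced to a stability statement. Throughout I write $c(\phi):=\frac{1+\phi^2}{2\phi}-1$ and
$$D_t(q):=\frac{k_0}{\left(X_0-\int_0^t\!\int_A\tilde a\,dq_s\,ds\right)^2\left(X_0-\phi\int_0^t\!\int_A\tilde a\,dq_s\,ds\right)^2}.$$

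\textbf{Item 1.} The algebraic identity $\frac{1+\phi^2}{2\phi}-1=\frac{1+\phi^2-2\phi}{2\phi}=\frac{(1-\phi)^2}{2\phi}$ gives $c(\phi)\to 0$ as $\phi\to 1$. Next I will show that $D_t(q)$ stays bounded uniformly in $(t,q)$ and in $\phi\in[c_0,1]$. This uses the lower bound \eqref{eq: cota inferior X_t dada por epsilon_0} and the bound $X_0-\phi\int_0^t\int\operatorname{id}\,dq_s\,ds>\epsilon_0$ from the proof of Corollary~\ref{sup: S}, neither of which depends on $\phi$. Hence $D_t(q)\le k_0/\epsilon_0^4$, and since $|a|\le M$,
$$|\Lambda|\le M\,c(\phi)\,\frac{k_0}{\epsilon_0^4}\to 0.$$
The convergence is uniform in $(t,x,\mu,q,a)$. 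The same argument gives $|\Lambda_2|\le M c(\phi)k_0/(X_0+TM)^4\to 0$.

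\textbf{Item 2.} The term $\Gamma$ depends continuously on $\phi$, and its limit at $\phi=1$ is exactly the zero-fee drift of \cite{munoz2025liquiditypoolsmfg}. Therefore $f\to x\Gamma|_{\phi=1}-h$ and $f_2\to x\Gamma|_{\phi=1}-h$, locally uniformly in $x$. The main obstacle is $f_1$. As written, $\Lambda_1$ contains the summand $-\tfrac12 D_t(q)^2$, which does \emph{not} vanish as $\phi\to1$. So item 2 fails for $f_1$ as literally defined, and the plan is to repair the construction of $\Lambda_1$ first.

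The repair uses the weighted Young inequality: for every $\eta>0$,
$$a\,c(\phi)\,D\le \frac{\eta}{2}\,a^2c(\phi)^2+\frac{1}{2\eta}\,D^2.$$
Choosing $\eta=1/c(\phi)$ gives
$$\Lambda\ \ge\ \Lambda_1^\phi:=-\frac{c(\phi)}{2}\left(a^2+D_t(q)^2\right).$$
The new $\Lambda_1^\phi$ is still additively separable in $a$ and $q$, so \textbf{(S.5)} holds. It is concave in $a$, so condition \textbf{(C)} holds. It is bounded by $\frac{c(\phi)}{2}\left(M^2+k_0^2/\epsilon_0^8\right)$, so \textbf{(S.4)} holds. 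Thus Corollary~\ref{cor: f1 f2} and the existence proposition go through unchanged with $\Lambda_1^\phi$ in place of $\Lambda_1$. Moreover $|\Lambda_1^\phi|\to0$ uniformly, and items 1--2 then hold for all three functions with the explicit rate $O\big((1-\phi)^2\big)$.

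\textbf{Item 3.} I would only reduce it to a known result, not prove it. The uniform estimate $\sup|f_i-f|\le C(1-\phi)^2$ implies $|J_{f_i}(\alpha)-J_f(\alpha)|\le CT(1-\phi)^2$ for every $\alpha$, and the argument of Lemma~\ref{lemma:Sandwich property} then gives $V_{f_2}-V_{f_1}\le 2CT(1-\phi)^2\to0$ for fixed flows $(\bm\mu,\bm q)$. Convergence of the equilibria themselves would require stability of MFG solutions of \cite{carmona2015weakmfgformulation} under uniform perturbations of $f$. The delicate point is that $\Gamma$ also changes with $\phi$ and the equilibrium flows move with it; this is why the statement remains a conjecture.
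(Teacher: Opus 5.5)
Your proposal is correct, and it is more careful than the paper's own argument.

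\textbf{What the paper does.} It proves item 1 by the identity $\frac{1+\phi^2}{2\phi}-1=\frac{(1-\phi)^2}{2\phi}$. It proves item 2 only by setting $\phi=1$ in $f$. For item 3 it asserts that $f_1,f_2\to f$ uniformly because $f_2-f_1$ is ``proportional to'' $\Lambda_2-\Lambda_1\to 0$, and defers the passage to values to a Berge-type argument.

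\textbf{The flaw you found.} That last claim is false for the $\Lambda_1$ the paper actually defines. The unweighted Young inequality leaves the summand $-\tfrac12 D_t(q)^2$, which tends to $-\tfrac12 k_0^2/(X_0-\int_0^t\int_A\tilde a\,dq_s\,ds)^8\neq 0$. So as written, neither $f_1\to x\Gamma-h$ nor $\Lambda_2-\Lambda_1\to 0$ holds, and the paper's proof is valid only for $f$ and $f_2$.

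\textbf{Your repair.} The $\phi$-dependent weight $\eta=1/c(\phi)$ gives $\Lambda_1^\phi=-\frac{c(\phi)}{2}\bigl(a^2+D_t(q)^2\bigr)\le\Lambda$. This holds for controls of either sign, since $xy\le\frac{\eta}{2}x^2+\frac{1}{2\eta}y^2$ for all real $x,y$ and $\eta>0$. The new term keeps (S.4), (S.5) and concavity in $a$. It also makes every difference $f_i-f=\Lambda_i-\Lambda$ uniformly $O((1-\phi)^2)$.

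\textbf{Item 3.} Because these differences do not depend on $x$, you get $|V_{f_i}-V_f|\le T\sup|f_i-f|$ directly for fixed flows, with no Berge argument. This correctly isolates the genuinely open part of item 3: the dependence of the equilibrium flows on $\phi$. The paper's route is shorter, but yours yields a true version of the statement for all three functions, with an explicit rate.

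\textbf{One imprecision.} The $O((1-\phi)^2)$ rate holds for the $\Lambda$-terms, i.e.\ for $f_i-(x\Gamma-h)$ with $\Gamma$ evaluated at the current $\phi$. Convergence to the zero-fee reward $x\Gamma|_{\phi=1}-h$ is only first order in $1-\phi$, because $\Gamma$ itself moves at first order. For example, at $t=0$ one has $\Gamma=k_0(1+\phi)\int_A\tilde a\,dq_0(\tilde a)/X_0^3$. That convergence is also only locally uniform in $x$, because of the factor $x$.
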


\begin{proof}
Point~(1) is a direct computation. For~(2), when $\phi = 1$ the function $f$ reduces to
$$f(t,x,\mu,q,a) = x\cdot k_0\!\int_A\!\tilde{a}\,dq_t(\tilde{a})
\cdot\frac{2X_0 - 2\displaystyle\int_0^t\!\int_A\!\tilde{a}\,dq_s(\tilde{a})\,ds}
{\left(X_0 - \displaystyle\int_0^t\!\int_A\!\tilde{a}\,dq_s(\tilde{a})\,ds\right)^3} - h(t,x),$$
which coincides with the reward function of the zero-cost model (up to algebraic simplification of the numerator).

For point~(3): one can verify directly that $f_1$ and $f_2$ converge uniformly to $f$ as $\phi\to 1$, since the difference $f_2 - f_1$ is proportional to $\Lambda_2 - \Lambda_1\to 0$ (see point~(1)). However, passing from uniform convergence of cost functions to convergence of optimal values $V_{f_i}\to V_f$ requires additional arguments --- in particular, verifying the hypotheses of Berge's maximum theorem (compactness of the admissible control set and continuity of the constraint correspondence). This is not established in the present work, so point~(3) is stated as a natural conjecture, strongly suggested by the uniform convergence.
\end{proof}

\begin{obs}[Practical justification of the $\varepsilon$-Nash equilibrium]
\label{obs:epsilon_nash_fees}
The proposition on $\varepsilon$-Nash equilibria via bounding problems provides a concrete computational path for assessing the quality of the approximation: one need only solve the MFG problems associated with $f_1$ and $f_2$ (which satisfy all standard hypotheses) and compare the resulting optimal values. If the gap $V_{f_2} - V_{f_1}$ is small, either optimal control of the auxiliary problems is a good approximation to the equilibrium of the original problem.

In particular, the most common fee in current AMM protocols is of the order $\tau = 0.3\%$ (Uniswap v3), giving $\phi = 0.997$ and
$$\frac{1+\phi^2}{2\phi} - 1 = \frac{(1-\phi)^2}{2\phi} \approx 4.5\times 10^{-6}.$$
This means the scaling factor of $\Lambda$ is of order $4.5\times 10^{-6}$, suggesting that the perturbation introduced by transaction costs is small relative to the frictionless case, in the sense that $f_2 - f_1$ is proportional to this factor. However, quantifying the effective gap $V_{f_2} - V_{f_1}$ requires numerical computation, since it depends on the full structure of $f_1$ and $f_2$ --- including the quadratic terms in $a$ and the denominators involving the reserves --- and not only on the scaling factor of $\Lambda$. Such quantification is identified as future work (see Remark~\ref{obs:numerical_gap}).
\end{obs}

\begin{obs}[Future direction: numerical quantification of the gap]
\label{obs:numerical_gap}
A natural direction for future work is to develop a numerical solver for the problems associated with $f_1$ and $f_2$, and to quantify the gap $V_{f_2} - V_{f_1}$ explicitly for various values of $\phi$. This would allow a computational verification that the gap is negligible for typical real-protocol fees, and would provide numerical validation complementary to the analytical estimate of Corollary~\ref{cor:phi_recovery}.
\end{obs}

%% file: Parts/Conclusion.tex
\section*{Conclusion}

In this paper we extend the mean field game framework developed in \cite{munoz2025liquiditypoolsmfg} to a more realistic setting in which traders operate in a decentralized liquidity pool subject to transaction costs. By explicitly modelling the impact of pool fees on price dynamics and on traders' inventories, we derive a modified price process and a corresponding objective functional. Although the introduction of transaction costs complicates the game-theoretic structure --- in particular, it breaks the separability condition \textbf{(S.5)} required by standard existence results --- we establish the existence of mean field equilibria for two auxiliary cost functions that sandwich the original one from above and below.

The main contribution of this work is a sandwich-type result showing that the value function of the original game is enclosed between those of the two auxiliary games. Even though the original objective function does not admit an approximate Nash equilibrium via standard theory due to its non-separable structure, our result provides a meaningful characterisation of the optimal cost. This opens the door to future research on existence and approximation results that go beyond classical structural assumptions, or to numerical methods tailored to these non-standard settings.

Furthermore, Corollary~\ref{cor:phi_recovery} shows that as $\phi\to 1$ (i.e., as the fee $\tau\to 0$) the gap between the auxiliary problems and the original one tends to zero, and the model recovers the results of the zero-cost case of \cite{munoz2025liquiditypoolsmfg}. For the fee levels typical of current AMM protocols, of order $\tau = 0.3\%$, the perturbation factor is of order $4.5\times 10^{-6}$, suggesting that the approximation is quantitatively very accurate. The numerical quantification of the gap $V_{f_2} - V_{f_1}$ is left for future work.

This paper also lays the groundwork for future extensions that incorporate the full ecosystem of decentralized markets. In ongoing work we include liquidity providers and arbitrageurs to obtain a more complete game-theoretic representation of pool dynamics, and develop the corresponding models with transaction costs. These extensions are key to bridging the theory of mean field games with real-world applications in decentralized finance.